\documentclass[12pt,twoside,reqno,psamsfonts]{amsart}

\usepackage[OT1]{fontenc}
\usepackage{type1cm}
\usepackage{amssymb}
\usepackage[dvips]{graphicx}
\usepackage{geometry}        
\usepackage{version}         


\geometry{reset,a4paper,centering}


\numberwithin{equation}{section}

\theoremstyle{plain}
\newtheorem{theorem}{Theorem}[section]

\theoremstyle{remark}
\newtheorem{remark}[theorem]{Remark}

\theoremstyle{definition}

\newtheorem{question}[theorem]{Question}

\newcommand{\HH}{\mathcal{H}}
\newcommand{\PP}{\mathcal{P}}

\newcommand{\R}{\mathbb{R}}

\newcommand{\N}{\mathbb{N}}

\renewcommand{\atop}[2]{\genfrac{}{}{0pt}{}{#1}{#2}}

\DeclareMathOperator{\dimh}{dim_H}
\DeclareMathOperator{\dimp}{dim_P}
\DeclareMathOperator{\dist}{dist}

\DeclareMathOperator{\proj}{proj}

\begin{document}

\title{On upper conical density results}

\author{Antti K\"aenm\"aki}
\address{Department of Mathematics and Statistics \\
         P.O. Box 35 (MaD) \\
         FI-40014 University of Jyv\"askyl\"a \\
         Finland}
\email{antti.kaenmaki@jyu.fi}

\subjclass[2000]{Primary 28A75; Secondary 28A80, 28A78, 28A12.}
\keywords{Upper conical density, dimension of measure, rectifiability}
\date{\today}

\begin{abstract}
  We report a recent developement on the theory of upper conical densities. More precicely, we look at what can be said in this respect for other measures than just the Hausdorff measure. We illustrate the methods involved by proving a result for the packing measure and for a purely unrectifiable doubling measure.
\end{abstract}

\maketitle

\section{Introduction}

Conical density theorems are used in geometric measure theory to derive geometric information from given metric information. Classically, they deal with the distribution of the Hausdorff measure. The main applications of upper conical density results concern rectifiability and porosity. The extensive study of upper conical densities was pioneered by Besicovitch \cite{Besicovitch1938} who studied the conical density properties of purely $1$-unrectifiable sets in the plane. Besides Besicovitch, the theory of upper conical densities has been developed by Morse and Randolph \cite{MorseRandolph1944}, Marstrand \cite{Marstrand1954}, and Federer \cite{Federer1969}.

\section{Notation and preliminaries}

Let $n \in \N$, $m \in \{ 0,\ldots,n-1 \}$, and $G(n,n-m)$ denote the space of all $(n-m)$-dimensional linear subspaces of $\R^n$. The unit sphere of $\R^n$ is denoted by $S^{n-1}$. For $x \in \R^n$, $\theta \in S^{n-1}$, $0 < \alpha \le 1$, $r>0$, and $V \in G(n,n-m)$, we set
\begin{align*}
  H(x,\theta,\alpha) &= \{ y \in \R^n : (y-x) \cdot \theta > \alpha
  |y-x| \}, \\
  X(x,V,\alpha) &= \{ y \in \R^n : \dist(y-x,V) < \alpha|y-x| \}, \\
  X(x,r,V,\alpha) &= B(x,r) \cap X(x,V,\alpha),
\end{align*}
where $B(x,r)$ is the closed ball centered at $x$ with radius $r$. We also denote $B_V(x,r) = \proj_V\bigl( B(x,r) \bigr)$, where $\proj_V$ is the orthogonal projection onto $V$.

By a measure we will always mean a locally finite nontrivial Borel regular (outer) measure defined on all subsets of $\R^n$. We use the notation $\HH^s$ and $\PP^s$ to denote the $s$-dimensional Hausdorff and packing measure, respectively. Consult \cite[\S 4 and \S 5.10]{Mattila1995}. We follow the convention according to which $c=c(\cdots)>0$ denotes a constant depending only on the parameters listed inside the parentheses.

If $A \subset \R^n$ is a Borel set with $0<\HH^s(A)<\infty$, then \cite[Theorem 6.2(1)]{Mattila1995} implies that for $\HH^s$-almost every $x \in A$ there are arbitrary small radii $r$ so that $\HH^s\bigl( A \cap B(x,r) \bigr)$ is proportional to $r^s$. So we know roughly how much of $A$ there is in such small balls $B(x,r)$. But we would also like to know how the set $A$ is distributed there. The following three upper conical density theorems give information how much there is $A$ near $(n-m)$-planes in the sense of the Hausdorff measure.

\begin{theorem}[\mbox{\cite[Theorem 3.1]{Salli1985}}] \label{thm:salli}
  If $n \in \N$, $m \in \{ 0,\ldots,n-1 \}$, $m<s\le n$, and $0<\alpha\le 1$, then there is a constant $c=c(n,m,s,\alpha)>0$ satisfying the following: For every $A \subset \R^n$ with $\HH^s(A)<\infty$ and for each $V \in G(n,n-m)$ it holds that
  \begin{equation*}
    \limsup_{r \downarrow 0} \frac{\HH^s\bigl( A \cap X(x,r,V,\alpha) \bigr)}{(2r)^s} \ge c
  \end{equation*}
  for $\HH^s$-almost every $x \in A$.
\end{theorem}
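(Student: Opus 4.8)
The plan is to argue by contradiction, reducing via the density theorems to a uniform, all-scales version of the hypothesis, and then showing that such a configuration would force the relevant part of the set to be essentially $m$-dimensional, which is impossible when $s>m$.

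\textbf{Reduction.} Suppose the statement is false. Then for a small constant $c>0$, to be fixed only at the very end, there exist a set $A\subset\R^n$ with $0<\HH^s(A)<\infty$, a plane $V\in G(n,n-m)$, and a Borel set $A_0\subseteq A$ with $\HH^s(A_0)>0$ such that
\[
  \limsup_{r\downarrow 0}\frac{\HH^s\bigl(A\cap X(x,r,V,\alpha)\bigr)}{(2r)^s}<c
  \qquad\text{for every }x\in A_0 .
\]
Since a finite $\limsup$ being $<c$ means the quotient is $<c$ for all sufficiently small $r$, I would restrict $A_0$ to a subset of positive measure carrying a uniform threshold $r_0>0$, so that $\HH^s\bigl(A\cap X(x,r,V,\alpha)\bigr)<c(2r)^s$ holds for all $x\in A_0$ and all $0<r\le r_0$. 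By the cited density theorem \cite[Theorem 6.2(1)]{Mattila1995}, for $\HH^s$-almost every $x$ the upper density of $A$ is finite and bounded below by $2^{-s}$; fixing a point $x^\ast$ that is a density point of $A_0$ in $A$ and at which this holds, I may assume $\HH^s\bigl(A\cap B(x,r)\bigr)\le C r^s$ near $x^\ast$ and that $A_0$ fills $A$ in density there.

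\textbf{Geometric translation.} Writing $\beta=\sqrt{1-\alpha^2}/\alpha$, a point $y$ lies \emph{outside} $X(x,V,\alpha)$ exactly when $|\proj_V(y-x)|\le\beta\,\dist(y-x,V)$; equivalently, $B(x,r)\setminus X(x,V,\alpha)$ is a cone with vertex $x$ about the $m$-dimensional plane $x+V^{\perp}$. Thus the uniform estimate says that, at every scale $r\le r_0$ and around every $x\in A_0$, all but a $c$-fraction of the mass of $A$ in $B(x,r)$ is carried by this cone around a translate of $V^{\perp}$. Informally, $A_0$ satisfies an \emph{approximate cone condition} relative to the direction $V^{\perp}$.

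\textbf{The crux.} The remaining and hardest task is to convert this approximate, all-scales cone condition into a dimension bound. If the condition held exactly (take $c=0$), then any two points $x,y\in A_0$ near $x^\ast$ would satisfy $|\proj_V(x-y)|\le\beta\,|\proj_{V^{\perp}}(x-y)|$, so $A_0$ would be the graph of a $\beta$-Lipschitz map from a subset of the $m$-dimensional space $V^{\perp}$ into $V$; such a graph has $\dimh\le m$, whence $\HH^s(A_0)=0$ because $s>m$, a contradiction. The genuine difficulty is that only a $(1-c)$-fraction of the mass obeys the cone condition at each center, so the exact graph argument must be replaced by a quantitative covering. Concretely, I would run a stopping-time (Vitali-type) construction in which each ball of radius $r$ is replaced by about $\rho^{-m}$ children of radius $\rho r$ covering its graph-like part, each carrying mass at most $C(\rho r)^s$, so the captured mass contracts by the factor $\rho^{-m}\cdot\rho^{s}=\rho^{\,s-m}<1$ per generation, while the mass escaping into the cone $X(\cdot,V,\alpha)$ contributes only a geometrically summable $c$-term. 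Choosing $\rho$, and then $c=c(n,m,s,\alpha)$, small enough that $\rho^{\,s-m}$ together with the accumulated escape stays below $1$ would show that the controlled part of $A$ near $x^\ast$ has $\HH^s$-measure zero, contradicting its positive lower density there. It is exactly in pitting the $m$-dimensional branching against the $s$-dimensional mass decay that the hypothesis $m<s$ is indispensable; for $s\le m$ one has $\rho^{\,s-m}\ge 1$ and the argument collapses.
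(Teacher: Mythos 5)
Your overall strategy --- contradiction, uniformizing the bad scales on a subset $A_0$ of positive measure, and then playing an $m$-dimensional covering count against the $s$-dimensional mass decay --- is the right one, and it is the same general mechanism as the only proof of this type actually given in the paper (the proof of Theorem \ref{thm:packing}; Theorem \ref{thm:salli} itself is only cited from Salli). However, there is a genuine gap at what you yourself identify as the crux, and the construction you sketch there does not close it.

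The hypothesis controls, for each \emph{single} center $x\in A_0$, the $\HH^s$-mass of $A$ inside the cone $X(x,r,V,\alpha)$; what is left uncontrolled is $A\cap B(x,r)\setminus X(x,V,\alpha)$, and for $\alpha<1$ this complement cone is a genuinely $n$-dimensional set whose cross-sections in the $V$-directions have diameter comparable to $\beta r$. Covering it by balls of radius $\rho r$ therefore costs on the order of $\rho^{-n}$ balls, not $\rho^{-m}$, and its intersection with a tube of width $\rho r$ around a translate of $V^\bot$ need not lie in a single ball of radius $O(\rho r)$. The count ``$\rho^{-m}$ children, each of mass at most $C(\rho r)^s$'' is only valid for a set satisfying the \emph{two-point} cone condition (i.e.\ an honest $\beta$-Lipschitz graph over $V^\bot$), and you cannot extract such a subset of positive measure from the one-center, measure-theoretic hypothesis by deleting exceptional sets: there are uncountably many of them, one per center. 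The missing ingredient --- which is precisely the engine of the paper's proof of Theorem \ref{thm:packing} and of Salli's original argument --- is to decompose $B(x,r)$ into about $c_1\rho^{-m}$ tubes $\proj_{V^\bot}^{-1}\bigl(B_{V^\bot}(\cdot,\rho r)\bigr)$ and to use the elementary geometric fact (the analogue of \eqref{eq:hippatsuikka}) that any two points of one tube at mutual distance at least $t\rho r$, with $t\sim 1/\alpha$, lie in each other's cones $X(\cdot,V,\alpha)$. Applying your uniform estimate at one point of $A_0$ inside each tube then confines all but $c(2r)^s$ of that tube's mass to a single ball of radius $t\rho r$, and only at that point does the bookkeeping $\rho^{-m}\cdot C(t\rho r)^s+\rho^{-m}c(2r)^s$ become legitimate and contradict the lower bound $\limsup_{r\downarrow 0}\HH^s\bigl(A\cap B(x,r)\bigr)/(2r)^s\ge 2^{-s}$ from \cite[Theorem 6.2(1)]{Mattila1995}. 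With that lemma inserted your outline does work (and in the Hausdorff case one generation already suffices, so no iteration is needed); without it, the key quantitative claim of your stopping-time construction is unsupported and, for the set the hypothesis actually controls, false.
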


\begin{remark} \label{rem:rect}
  The role of the assumption $s>m$ in the above theorem is to guarantee that the set $A$ is scattered enough. When $s \le m$, it might happen that $A \in V^\bot$ for some $V \in G(n,n-m)$. In fact, if the set $A$ is $m$-rectifiable with $\HH^m(A)<\infty$, then it follows from \cite[Theorem 15.19]{Mattila1995} that the result of Theorem \ref{thm:salli} cannot hold. On the other hand, if the set $A$ is purely $m$-unrectifiable with $\HH^m(A)<\infty$, then the result of Theorem \ref{thm:salli} holds, see \cite[Corollary 15.16]{Mattila1995}. We refer the reader to \cite[\S 15]{Mattila1995} for the basic properties of rectifiable and purely unrectifiable sets. See also \S \ref{sec:unrect}.
\end{remark}

\begin{theorem}[\mbox{\cite[Theorem 3.3]{Mattila1988}}] \label{thm:mattila}
  If $n \in \N$, $m \in \{ 0,\ldots,n-1 \}$, $m<s\le n$, and $0<\alpha\le 1$, then there is a constant $c=c(n,m,s,\alpha)>0$ satisfying the following: For every $A \subset \R^n$ with $\HH^s(A)<\infty$ it holds that
  \begin{equation*}
    \limsup_{r \downarrow 0} \inf_{V \in G(n,n-m)} \frac{\HH^s\bigl( A \cap X(x,r,V,\alpha) \bigr)}{(2r)^s} \ge c
  \end{equation*}
  for $\HH^s$-almost every $x \in A$.
\end{theorem}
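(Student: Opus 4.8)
The first thing I would try is to deduce the statement from Theorem \ref{thm:salli} by compactness of the Grassmannian $G(n,n-m)$. If $V,V'\in G(n,n-m)$ are within angle $\delta$, then $\dist(u,V)\le\dist(u,V')+C_0\delta|u|$ for every $u$, so $X(x,V',\alpha-C_0\delta)\subseteq X(x,V,\alpha)$, and hence $\HH^s\bigl(A\cap X(x,r,V,\alpha)\bigr)\ge\HH^s\bigl(A\cap X(x,r,V_i,\alpha-C_0\delta)\bigr)$ whenever $V_i$ is the net point closest to $V$. Fixing a finite $\delta$-net $V_1,\dots,V_N$ thus bounds $\inf_V$ from below by $\min_i$, and Theorem \ref{thm:salli}, applied with the smaller aperture $\alpha-C_0\delta$, controls each $V_i$ for $\HH^s$-almost every $x$. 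The obstruction is that this only produces $\limsup_r\min_i$, whereas Theorem \ref{thm:salli} delivers $\min_i\limsup_r$; since $\limsup_r\min_i f_i\le\min_i\limsup_r f_i$, the radii on which the density is large may differ from direction to direction, and Salli's theorem as a black box does not force a common good scale. A genuinely quantitative argument is needed.

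The plan is therefore to argue by contradiction and to exploit $s>m$ directly. Suppose the conclusion fails on a Borel set $E\subseteq A$ with $\HH^s(E)>0$: for every $x\in E$ and all small $r$ there is $V=V(x,r)$ with $\HH^s\bigl(A\cap X(x,r,V,\alpha)\bigr)<c(2r)^s$. The Hausdorff density theorems (\cite[Theorem 6.2]{Mattila1995}) supply, after passing to a positive-measure subset, both a uniform upper regularity bound $\HH^s\bigl(A\cap B(y,\rho)\bigr)\le a\rho^s$ for $\rho\le\rho_0$ and arbitrarily small good scales $r$ with $\HH^s\bigl(A\cap B(x,r)\bigr)\ge b(2r)^s$. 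Writing $B(x,r)$ as the disjoint union of $X(x,r,V,\alpha)$ and its complement, failure at such a good scale forces $\HH^s\bigl(A\cap(B(x,r)\setminus X(x,V,\alpha))\bigr)\ge(b-c)(2r)^s$. Since $\dist(u,V)^2+\dist(u,V^\perp)^2=|u|^2$, this complement lies in the fat cone $\{y:\dist(y-x,V^\perp)\le\lambda|y-x|\}$ about the $m$-plane $V^\perp$, where $\lambda=\sqrt{1-\alpha^2}$. Thus a definite fraction of the mass in $B(x,r)$ concentrates in a $\lambda$-neighborhood of an $m$-dimensional subspace.

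The relevance of $s>m$ is that an $m$-plane is $\HH^s$-null, so mass cannot pile up near $m$-planes at every scale without the measure collapsing. Quantitatively, covering the complement dyadically by balls strung along $V^\perp$ uses about $\lambda^{-m}$ balls of radius $\lambda 2^{-k}r$ at scale $k$, so its covering cost is $\sum_k\lambda^{-m}(2\lambda 2^{-k}r)^s\approx\lambda^{s-m}(2r)^s$ up to a constant. Feeding this back and iterating the construction as a Vitali-type covering of $E$, I would build covers of $E$ of arbitrarily small $\HH^s$-premeasure and conclude $\HH^s(E)=0$, the desired contradiction.

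The main obstacle is that the per-scale gain is precisely the factor $\lambda^{s-m}$, which is smaller than $1$ only when $\alpha$ is not too small; for thin cones ($\alpha\to0$, $\lambda\to1$) a single scale buys nothing, and one is forced to combine the concentration over all scales into a genuine dimension, or mean-porosity, estimate, showing that repeated concentration near $m$-planes drives the effective dimension down to $m<s$. Carrying out this multi-scale bookkeeping with the correct constant $c=c(n,m,s,\alpha)$ — or, in the Hausdorff case, passing to tangent measures to gain exact $s$-regularity and iterating there — is where the real work lies.
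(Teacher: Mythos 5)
Your opening observation is correct and well taken: compactness of $G(n,n-m)$ reduces the infimum over $V$ to a minimum over a finite net at the cost of shrinking the aperture, but Theorem \ref{thm:salli} only delivers $\min_i \limsup_r$, not $\limsup_r \min_i$, so a common good scale has to be manufactured by hand. (For the record, the paper does not prove Theorem \ref{thm:mattila} itself: Mattila's original argument uses Fubini-type reasoning and sliced measures, and the elementary route goes through Theorem \ref{thm:hausdorff}, whose technique is displayed in the proof of Theorem \ref{thm:packing}.) The genuine gap is in your quantitative mechanism. You propose to cover $A \cap B(x,r) \setminus X(x,r,V,\alpha)$, which lies in the cone of aperture $\lambda=\sqrt{1-\alpha^2}$ around the $m$-plane $x+V^\bot$, by about $\lambda^{-m}$ balls of radius $\lambda 2^{-k}r$ per dyadic annulus, gaining a factor $\lambda^{s-m}$ per scale. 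As you yourself concede, this factor does not beat the regularity constants when $\alpha$ is small ($\lambda$ close to $1$), and the multi-scale ``mean porosity'' bookkeeping you then defer to is not a routine patch --- it is the entire content of the theorem. A fixed positive proportion of mass concentrating near a \emph{varying} $m$-plane at every scale does not by itself force $\HH^s(E)=0$; without an explicit iteration scheme that tracks how the planes, the apertures, and the constants interact, nothing has been proved.

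It is worth contrasting your plan with how the paper's method (in the proof of Theorem \ref{thm:packing}) actually closes this gap. Instead of covering the complement of the cone, which is large when the cone is thin, one works \emph{inside} a tube $\proj_{V_j^\bot}^{-1}\bigl( B_{V_j^\bot}(y',\lambda) \bigr)$ whose width $\lambda$ is chosen small in terms of $n,m,s,\alpha$; the geometry of the tube, cf.\ \eqref{eq:hippatsuikka}, guarantees that any two of its points at distance at least $t\lambda$ see each other inside the $\alpha/2$-cone, so if the cone carries little measure, a greedy selection of $q$ disjoint balls pigeonholes essentially all of the tube's measure into $q-1$ balls of radius $t\lambda$. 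The tube decomposition costs only a factor $\lambda^{m}$ (an $m$-dimensional count) while the radius drops by $t\lambda$, and since $s>m$ one can choose $\lambda$ so small that the density ratio along the resulting nested sequence of balls grows like $(d\lambda^{m-s}t^{-s})^k \ge 2^{(s-m)k}$, contradicting the density theorem at the limit point. That is where $s>m$ is cashed in uniformly in $\alpha$. Your sketch identifies the correct heuristic but stops exactly where the proof has to begin.
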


The above theorem is a significant improvement of Theorem \ref{thm:salli}. It shows that in the sense of the Hausdorff measure, there are arbitrary small scales so that almost all points of $A$ are well surrounded by $A$. Theorem \ref{thm:mattila} is actually applicable for more general symmetric cones. More precisely, the set $X(x,r,V,\alpha)$ can be replaced by $C_x \cap B(x,r)$, where $C_x=\bigcup_{V \in C}(V+x)$, $C \subset G(n,n-m)$ is a Borel set with $\gamma(C)>\delta>0$, and $\gamma$ is the natural isometry invariant measure on $G(n,n-m)$. The infimum is then taken over all such sets $C$. The proof of Theorem \ref{thm:mattila} (and its more general formulation) is nontrivial and it is based on Fubini-type arguments and an elegant use of the so-called sliced measures. The following theorem gives Theorem \ref{thm:mattila} a more elementary proof. The technique used there does not require the cones to be symmetric.

\begin{theorem}[\mbox{\cite[Theorem 2.5]{KaenmakiSuomala2004}}] \label{thm:hausdorff}
  If $n \in \N$, $m \in \{ 0,\ldots,n-1 \}$, $m<s\le n$, and $0<\alpha\le 1$, then there is a constant $c=c(n,m,s,\alpha)>0$ satisfying the following: For every $A \subset \R^n$ with $\HH^s(A)<\infty$ it holds that
  \begin{equation*}
    \limsup_{r \downarrow 0} \inf_{\atop{\theta \in S^{n-1}}{V \in G(n,n-m)}} \frac{\HH^s\bigl( A \cap X(x,r,V,\alpha) \setminus H(x,\theta,\alpha) \bigr)}{(2r)^s} \ge c
  \end{equation*}
  for $\HH^s$-almost every $x \in A$.
\end{theorem}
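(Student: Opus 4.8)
The plan is to argue by contradiction, reduce to a good point and good scales by the density theorems, and then exploit the gap $s>m$ to prevent the mass of $A$ from escaping into the discarded half-cone. Suppose the assertion fails. Then there are $\eps>0$ and a Borel set $E\subset A$ with $\HH^s(E)>0$ such that for every $x\in E$ there is $r_x>0$ with the following property: for all $0<r<r_x$ one can choose $\theta=\theta_{x,r}\in S^{n-1}$ and $V=V_{x,r}\in G(n,n-m)$ with
\[
  \HH^s\bigl(A\cap X(x,r,V,\alpha)\setminus H(x,\theta,\alpha)\bigr)<\eps(2r)^s.
\]
First I would pass to a favourable point: by the density theorems for Hausdorff measure \cite[Theorem 6.2]{Mattila1995}, almost every $x\in E$ is a density point of $E$ with respect to $\HH^s|_A$ and satisfies $2^{-s}\le\Theta^{\ast s}(A,x)\le1$; fix such an $x$ and a sequence $r\downarrow0$ along which $\HH^s(A\cap B(x,r))\ge2^{-s-1}(2r)^s$.

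Next I would isolate the geometry. The key elementary observation is that the surviving region is always \emph{fat}: since $\dim V=n-m\ge1$, one can pick a unit vector $v\in V$ with $v\cdot\theta\le0$, set $z=x+\tfrac{r}{2}v$, and check by a direct estimate that $B(z,cr)\subset X(x,r,V,\alpha)\setminus H(x,\theta,\alpha)$ for a constant $c=c(\alpha)>0$. Moreover, as $(V,\theta)$ range over all admissible pairs the centre $v$ sweeps the whole unit sphere, so the badness hypothesis asserts precisely that at each small scale there is some cap $B(z,cr)$ on $\partial B(x,r/2)$ in which the density of $A$ is small, namely $\HH^s(A\cap B(z,cr))<\eps(2r)^s$. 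The contradiction should then be manufactured by propagating this smallness: using that $x$ is a density point of $E$, nearby points of $E$ supply further low-density caps at comparable scales, and a Besicovitch-type covering of $B(x,r)$ by such caps (iterated over a geometric sequence of scales) should drive $\HH^s(A\cap B(x,r))$ below $2^{-s-1}(2r)^s$, contradicting the lower bound fixed above once $\eps$ is chosen small relative to the covering constants.

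The step I expect to be the main obstacle is exactly the half-cone removal, and it is here that the hypothesis $s>m$ is indispensable. Discarding $H(x,\theta,\alpha)$ makes the statement genuinely stronger than the symmetric Theorem \ref{thm:mattila}, and one cannot simply quote that result: for the most damaging directions $\theta\in V$ the surviving region contains no symmetric cone about an $(n-m)$-plane, and the largest symmetric sub-cone it does contain is modelled on an $(n-m-1)$-plane, whose use would demand the strictly stronger hypothesis $s>m+1$. The difficulty is therefore to rule out, uniformly in $\theta\in S^{n-1}$, that the measure concentrates in the removed half-cone together with the complementary cone about the $m$-plane $V^\perp$ at every small scale. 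I would control this through the iteration sketched above: repeated confinement of the mass to cones about the (varying) $m$-planes $V_r^\perp$ and to half-cones forces an effectively $m$-dimensional behaviour of $A$ near $x$, which is incompatible with the positive lower $s$-density secured in the first step precisely when $s>m$. Turning this dimension reduction into an explicit constant $c=c(n,m,s,\alpha)$, while keeping every estimate uniform over $\theta$ and $V$, is the delicate technical heart of the proof.
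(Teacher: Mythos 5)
Your geometric observation that the surviving region always contains a ball $B(z,cr)$ with $z=x+\tfrac{r}{2}v$, $v\in V\cap S^{n-1}$, $v\cdot\theta\le 0$, is correct, as is your remark that Theorem \ref{thm:mattila} cannot simply be quoted because for $\theta\in V$ the surviving region contains no symmetric $(n-m)$-cone. But the reduction you build on this is where the argument breaks. Replacing the counter-assumption ``$\HH^s\bigl(A\cap X(x,r,V,\alpha)\setminus H(x,\theta,\alpha)\bigr)<\eps(2r)^s$'' by ``some cap $B(z,cr)$ at distance $r/2$ from $x$ has measure $<\eps(2r)^s$'' discards the cone structure entirely and leaves a statement that is not contradictory: any $s$-set $A$ lying in a half-space with $x$ on its boundary has, at every scale, a cap of measure zero, yet the theorem holds for such $A$. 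Consequently the proposed endgame --- covering $B(x,r)$ by low-density caps supplied by nearby points of $E$ so as to drive $\HH^s\bigl(A\cap B(x,r)\bigr)$ down --- cannot work: there is no reason the available caps cover a fixed proportion of $B(x,r)$, and nowhere in the sketch does the hypothesis $s>m$ enter quantitatively. Your final paragraph correctly identifies this as the crux but supplies no mechanism for it.

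The method the paper illustrates (for the packing analogue, Theorem \ref{thm:packing}) is quite different and shows what is missing. After discretizing $G(n,n-m)$ to finitely many planes $V_1,\dots,V_K$, one covers $B_{V_j^\bot}(x_0,1)$ by roughly $\lambda^{-m}$ balls of radius $\lambda$ and selects one tube $\proj_{V_j^\bot}^{-1}\bigl(B_{V_j^\bot}(y',\lambda)\bigr)$ carrying at least a $\lambda^m$-proportion of the mass; inside a tube any two points at distance at least $t\lambda$ lie in each other's cones $X(\cdot,V_j,\alpha/2)$, and a pigeonhole lemma applied to $q$ greedily chosen separated balls produces a center $y_{i_0}$ such that for \emph{every} $\theta$ an entire ball $B(y_k,\lambda)$ sits inside $X(y_{i_0},3,V_j,\alpha/2)\setminus H(y_{i_0},\theta,\alpha)$. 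The counter-assumption then forces the tube's mass to concentrate in $q-1$ balls of radius $t\lambda$, and iterating yields nested balls $B(x_k,(t\lambda)^k)$ carrying measure at least a constant times $(d\lambda^m)^k$; since $s>m$ one may choose $\lambda$ so small that $d\lambda^{m-s}t^{-s}>1$, so the upper $s$-density blows up at the limit point, contradicting the almost-everywhere upper density bound. It is exactly this tube decomposition transverse to $V$, the ``two points in a tube see each other'' lemma, and the density blow-up at a \emph{new} point (rather than a density decrease at the original point) that your plan lacks.
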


The main application of this theorem is porosity. With porous sets we mean sets which have holes on every small scale. For precise definition of the porous set and the connection between porosity and upper conical densities, the reader is referred to \cite[Theorem 11.14]{Mattila1995}, \cite[Theorem 3.2]{KaenmakiSuomala2004}, and \cite[\S 3]{KaenmakiSuomala2008}. See \cite{EckmannJarvenpaaJarvenpaa2000, BeliaevSmirnov2002, JarvenpaaJarvenpaa2002, JarvenpaaJarvenpaaKaenmakiSuomala2005, RajalaSmirnov2007, JarvenpaaJarvenpaaKaenmakiRajalaRogovinSuomala2007, Chousionis2008b, Rajala2009} for other related results.

We will now look at what kind of upper conical density results can be proven for other measures.

\section{Packing type measures}

The following result is Theorem \ref{thm:hausdorff} for the packing measure. A more general formulation can be found in \cite{KaenmakiSuomala2008}. To our knowledge, it is the first upper conical density result for other measures than the Hausdorff measure.

\begin{theorem} \label{thm:packing}
  If $n \in \N$, $m \in \{ 0,\ldots,n-1 \}$, $m<s\le n$, and $0<\alpha\le 1$, then there is a constant $c=c(n,m,s,\alpha)>0$ satisfying the following: For every $A \subset \R^n$ with $\PP^s(A)<\infty$ it holds that
  \begin{equation*}
    \limsup_{r \downarrow 0} \inf_{\atop{\theta \in S^{n-1}}{V \in G(n,n-m)}} \frac{\PP^s\bigl( A \cap X(x,r,V,\alpha) \setminus H(x,\theta,\alpha) \bigr)}{(2r)^s} \ge c
  \end{equation*}
  for $\PP^s$-almost every $x \in A$.
\end{theorem}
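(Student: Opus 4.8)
The plan is to run the elementary scheme behind Theorem~\ref{thm:hausdorff} and to isolate the single step at which the Hausdorff measure is used, replacing it by the density theory of the packing measure. I would argue by contradiction. We may assume $\PP^s(A)>0$, set $\mu=\PP^s|_A$ (a finite Borel measure), and suppose the conclusion fails: there are $\eps>0$ and a Borel set $E\subset A$ with $\PP^s(E)>0$ such that $\limsup_{r\downarrow0}\inf_{\theta\in S^{n-1},\,V\in G(n,n-m)}\mu\bigl(X(x,r,V,\alpha)\setminus H(x,\theta,\alpha)\bigr)/(2r)^s<\eps$ for every $x\in E$, the value of $\eps$ to be fixed at the end. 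Decomposing $E$ according to the threshold radius below which the infimum stays $<\eps(2r)^s$, I may assume this radius is a uniform $r_0>0$ on $E$.

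The one genuinely measure-theoretic ingredient is the density theorem for packing measures \cite[\S6]{Mattila1995}: since $\PP^s(A)<\infty$, for $\PP^s$-almost every $x\in A$ one has $2^{-s}\le\Theta^{*s}(\mu,x)\le1$, where $\Theta^{*s}(\mu,x)=\limsup_{r\downarrow0}\mu\bigl(B(x,r)\bigr)/(2r)^s$. Thus, after discarding a $\PP^s$-null set, for every $x\in E$ there is a sequence $r_j\downarrow0$ along which $\mu\bigl(B(x,r_j)\bigr)\ge2^{-s-1}(2r_j)^s$, while $\mu\bigl(B(y,t)\bigr)\le2(2t)^s$ for $\mu$-almost every $y$ and all sufficiently small $t$. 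These two-sided bounds on the mass of small balls are precisely what must be played against the smallness of the truncated cones; in Theorem~\ref{thm:hausdorff} their role is taken by the Hausdorff density theorem \cite[Theorem~6.2]{Mattila1995}.

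The remaining part is geometric and is shared with the proofs in \cite{KaenmakiSuomala2004,KaenmakiSuomala2008}; here the hypothesis $s>m$ is decisive (cf.\ Remark~\ref{rem:rect}). The mechanism is multiscale. At a single scale the bad-cone bound only says that the mass of $B(x,r)$ avoids the region $X(x,r,V,\alpha)\setminus H(x,\theta,\alpha)$ near the plane $V$; since $n-m\ge1$ this is a genuine constraint, but it can be met by placing the mass either far from $V$ or on the $\theta$-side of $V$. The content of the argument is that such evasion cannot persist: as $r$ runs through the scales $r_j$ available at $x$, the planes $V=V(x,r)$ and the directions $\theta=\theta(x,r)$ vary, and a quantitative, rotation-sensitive covering estimate — exploiting $s>m$ to guarantee enough transversal room inside the cones — shows that repeatedly avoiding the truncated cones forces $\mu$ to concentrate at small scales beyond what the bound $\Theta^{*s}(\mu,x)\le1$ permits. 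Choosing $\eps$ smaller than the resulting dimensional threshold then yields the contradiction.

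The step I expect to be the main obstacle is making this concentration genuinely contradict the packing density bound. Two features of $\PP^s$ must be handled that are absent for $\HH^s$: it possesses no useful lower density, and it is not readily localized, since $\PP^s(F)=\inf\bigl\{\sum_iP_0^s(F_i):F\subset\bigcup_iF_i\bigr\}$ is built from the packing pre-measure $P_0^s$ through a countable infimum. Consequently the covering and iteration cannot be run with the additive, monotone Hausdorff content; one must instead track $P_0^s$ on the individual pieces, arrange the overloaded ball to have its centre in the full-measure set on which $\Theta^{*s}\le1$ (so that the excessive density is actually forbidden), and keep every constant independent of $x$, $r$, $\theta$ and $V$. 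Carrying the estimate uniformly through this infimum is exactly the technical point that the general framework of \cite{KaenmakiSuomala2008} is built to automate.
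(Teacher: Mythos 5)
Your outline---contradiction, a density theorem for $\PP^s$, and a multiscale geometric argument exploiting $s>m$---is the right shape and agrees with the paper's strategy, but as written there is a genuine gap: the entire geometric core is described rather than carried out, and you yourself defer it as ``the main obstacle'' without resolving it. What is missing is precisely the quantitative mechanism that makes the proof work. In the paper one first reduces to finitely many planes $V_1,\ldots,V_K$ by compactness of $G(n,n-m)$, locates a slab $\proj_{V_j^\bot}^{-1}\bigl(B_{V_j^\bot}(y',\lambda)\bigr)\cap B(x_0,1)$ carrying a definite fraction of the measure, and greedily selects $q$ pairwise disjoint balls $B(y_i,t\lambda)$ centered in it in decreasing order of $\PP^s\bigl(A\cap B(y_i,\lambda)\bigr)$. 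Two elementary facts from \cite{CsornyeiKaenmakiRajalaSuomala2009} then apply: among $q$ such centers some three form a large angle, so one ball avoids $H(y_{i_0},\theta,\alpha)$ for every $\theta$ simultaneously, and the choice of $t$ forces each ball into the cone $X(y_0,V_j,\alpha/2)$ around any other center of the slab. The antithesis makes the last (smallest) ball light, hence by pigeonhole one of the first $q-1$ balls carries measure at least $2^sd\lambda^m/2$, and iterating yields nested balls $B(x_k,(t\lambda)^k)$ with $\PP^s\bigl(A\cap B(x_k,(t\lambda)^k)\bigr)\ge 2^sd^k\lambda^{km}/2$, which for $s>m$ outgrows $(t\lambda)^{ks}$. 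None of this pigeonhole-and-iteration structure appears in your text, and it is where the constants $t$, $q$, $\lambda$, $d$, $c$ and the hypothesis $s>m$ actually enter.

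Second, the measure-theoretic input you invoke is not the right one. The two-sided bounds $2^{-s}\le\Theta^{*s}\le 1$ are the \emph{Hausdorff} density theorem \cite[Theorem 6.2]{Mattila1995}; for the packing measure the statement the proof needs is \cite[Theorem 6.10]{Mattila1995}, namely $\liminf_{r\downarrow 0}\PP^s\bigl(A\cap B(x,r)\bigr)/(2r)^s=1$ for $\PP^s$-almost every $x\in A$, and the contradiction is reached against this \emph{lower} density: the nested balls above produce a point $z$ at which the lower density is infinite. Relatedly, your concern about having to track the pre-measure $P_0^s$ through the countable infimum defining $\PP^s$ is a red herring: once Theorem 6.10 is available, the argument uses $\PP^s$ only as an ordinary finite Borel measure on $A$ (countable subadditivity over the sets $A_j$ and over the covers of the slab), and the pre-measure never appears.
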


\begin{proof}
  Fix $n \in \N$, $m \in \{ 0,\ldots,n-1 \}$, and $0<\alpha\le 1$.
  Observe that $G(n,n-m)$ endowed with the metric $d(V,W) = \sup_{x \in V \cap S^{n-1}} \dist(x,W)$ is a compact metric space and
  \begin{equation}
    \bigcup_{d(V,W)<\alpha} \{ x : x \in W \} = X(0,V,\alpha)
  \end{equation}
  for all $V \in G(n,n-m)$, see \cite[Lemma 2.2]{Salli1985}. Using the compactness, we may thus choose $K=K(n,m,\alpha) \in \N$ and $(n-m)$-planes $V_1,\ldots,V_K$ so that for each $V \in G(n,n-m)$ there is $j \in \{ 1,\ldots,K \}$ with
  \begin{equation*}
    X(x,V,\alpha) \supset X(x,V_j,\alpha/2).
  \end{equation*}

  Let $t = \max\{ t(\alpha/2),1+6/\alpha \} \ge 1$, where $t(\alpha/2)$ is as in \cite[Lemma 4.3]{CsornyeiKaenmakiRajalaSuomala2009} and take $q=q(n,\alpha/(2t))$ from \cite[Lemma 4.2]{CsornyeiKaenmakiRajalaSuomala2009}. If $\lambda > 0$ and $\{ B(y_i,t\lambda) \}_{i=1}^q$ is a collection of pairwise disjoint balls centered at $B(0,1)$, then it follows from the above mentioned lemmas that there exists $i_0 \in \{ 1,\ldots,q \}$ such that for every $\theta \in S^{n-1}$ there is $k \in \{ 1,\ldots,q \}$ for which
  \begin{equation} \label{eq:erdos}
    B(y_k,\lambda) \subset B(y_{i_0},3) \setminus H(y_{i_0},\theta,\alpha).
  \end{equation}
  In fact, there are three center points that form a large angle. The choice of $t$ also implies that for every $y,y_0 \in \proj_{V^\bot}^{-1}\bigl( B_{V^\bot}(0,\lambda) \bigr)$ with $|y-y_0| \ge t\lambda$, we have
  \begin{equation} \label{eq:hippatsuikka}
    B(y,\lambda) \subset X(y_0,V,\alpha/2).
  \end{equation}
  Let $c_1=c_1(m)$ and $c_2=c_2(n)$ be such that the set $B_{V^\bot}(0,1)$ may be covered by $c_1\lambda^{-m}$ and the set $\proj_{V^\bot}^{-1}\bigl( B_{V^\bot}(0,\lambda) \bigr) \cap B(0,1)$ may be covered by $c_2\lambda^{m-n}$ balls of radius $\lambda$ for all $V \in G(n,n-m)$.   Finally, fix $m<s\le n$ and set $\lambda=\lambda(n,m,s,\alpha) = \min\{ 2^{-1}t^{s/(m-s)}d^{1/(s-m)},(3t)^{-1} \} > 0$, where $d=d(n,m,\alpha)=1/(2c_1K(q-1))$.

  It suffices to show that if $c = \lambda^n/(3^s4c_1c_2K) > 0$ and $A \subset \R^n$ with $\PP^s(A)<\infty$, then
  \begin{equation*} 
    \limsup_{r \downarrow 0} \inf_{\atop{\theta \in S^{n-1}}{j \in \{ 1,\ldots,K \}}} \frac{\PP^s\bigl( A \cap X(x,r,V_j,\alpha/2) \setminus H(x,\theta,\alpha) \bigr)}{(2r)^s} \ge c
  \end{equation*}
  for $\PP^s$-almost every $x \in A$. Assume to the contrary that there are $r_0>0$ and a closed set $A \subset \R^n$ with $0<\PP^s(A)<\infty$ so that for every $x \in A$ and $0<r<r_0$ there exist $j \in \{ 1,\ldots,K \}$ and $\theta \in S^{n-1}$ such that
  \begin{equation*} 
    \PP^s\bigl( A \cap X(x,r,V_j,\alpha/2) \setminus H(x,\theta,\alpha) \bigr) < c(2r)^s.
  \end{equation*}
  Recalling \cite[Theorem 6.10]{Mattila1995}, we may further assume that
  \begin{equation} \label{eq:taylor_tricot}
    \liminf_{r \downarrow 0} \frac{\PP^s\bigl( A \cap B(x,r) \bigr)}{(2r)^s} = 1
  \end{equation}
  for all $x \in A$. Pick $x_0 \in A$ and choose $0<r'<r_0$ so that $\PP^s\bigl( A \cap B(x_0,r') \bigr) \ge (2r')^s/2$. For notational simplicity, we assume that $r'=1$ and $r_0>3$. Since $A = \bigcup_{j=1}^K A_j$, where
  \begin{equation} \label{eq:j_antithesis}
    A_j = \{ x \in A : \PP^s\bigl( A \cap X(x,3,V_j,\alpha/2) \setminus H(x,\theta,\alpha) \bigr) < c6^s \text{ for some } \theta \},
  \end{equation}
  we find $j \in \{ 1,\ldots,K \}$ for which $\PP^s\bigl( A_j \cap B(x_0,1) \bigr) \ge 2^s/(2K)$. Going into a subset, if necessary, we may assume that $A_j \cap B(x_0,1)$ is compact. Moreover, we may cover the set $B_{V_j^\bot}(x_0,1)$ by $c_1\lambda^{-m}$ balls of radius $\lambda$. Hence
  \begin{equation} \label{eq:slice}
    \PP^s\bigl( A_j \cap \proj_{V_j^\bot}^{-1}\bigl( B_{V_j^\bot}(y',\lambda) \bigr) \cap B(x_0,1) \bigr) \ge \lambda^m 2^s/(2c_1K)
  \end{equation}
  for some $y' \in V_j^\bot$. Next we choose $q$ pairwise disjoint balls $\{ B(y_i,t\lambda) \}_{i=1}^q$ centered at $A_j' = A_j \cap \proj_{V_j^\bot}^{-1}\bigl( B_{V_j^\bot}(y',\lambda) \bigr) \cap B(x_0,1)$ so that for each $i \in \{ 1,\ldots,q \}$ it holds $\PP^s\bigl( A \cap B(y_i,\lambda) \bigr) \ge \PP^s\bigl( A \cap B(y,\lambda) \bigr)$ for all $y \in A_j' \setminus \bigcup_{k=1}^{i-1} U(y_k,t\lambda)$, where $U(x,r)$ denotes the open ball. This can be done since the set $A_j'$ is compact and the function $y \mapsto \PP^s\bigl( A \cap B(y,\lambda) \bigr)$ is upper semicontinuous. The set $A_j'$ can be covered by $c_2\lambda^{m-n}$ balls of radius $\lambda$, whence
  \begin{equation} \label{eq:q_arvio}
    c_2\lambda^{m-n} \PP^s\bigl( A \cap B(y_q,\lambda) \bigr) \ge \lambda^m2^s/(2c_1K) - \sum_{i=1}^{q-1} \PP^s\bigl( A \cap B(y_i,t\lambda) \bigr)
  \end{equation}
  by recalling \eqref{eq:slice}. Now \eqref{eq:erdos}, \eqref{eq:hippatsuikka}, and \eqref{eq:j_antithesis} give
  \begin{align*}
    \PP^s\bigl( A \cap B(y_q,\lambda) \bigr) \le \PP^s\bigl( A \cap X(y_{i_0},3,V_j,\alpha/2) \setminus H(y_{i_0},\theta,\alpha) \bigr) < c6^s,
  \end{align*}
  and consequently,
  \begin{equation*}
    \sum_{i=1}^{q-1} \PP^s\bigl( A \cap B(y_i,t\lambda) \bigr) \ge \lambda^m2^s/(2c_1K) - c6^sc_2\lambda^{m-n} = (q-1)2^sd\lambda^m/2
  \end{equation*}
  by \eqref{eq:q_arvio} and the choices of $c$ and $d$. Hence $\PP^s\bigl( A \cap B(x_1,t\lambda) \bigr) \ge 2^sd\lambda^m/2$ for some $x_1 \in \{ y_1,\ldots,y_{q-1} \} \subset A \cap B\bigl( x_0,1 \bigr)$. Recall that $\PP^s\bigl( A \cap B(x_0,1) \bigr) \ge 2^s/2$. Repeating now the above argument in the ball $B(x_1,t\lambda)$, we find a point $x_2 \in A \cap B(x_1,t\lambda)$ so that $\PP^s\bigl( A \cap B(x_2,(t\lambda)^2) \bigr) \ge 2^sd^2\lambda^{2m}/2$. Continuing in this manner, we find for each $k \in \N$ a ball $B(x_k,(t\lambda)^k)$ centered at $A \cap B(x_{k-1},(t\lambda)^{k-1})$ so that $\PP^s\bigl( A \cap B(x_k,(t\lambda)^k) \bigr) \ge 2^sd^k\lambda^{km}/2$.

  Now for the point $z \in A$ determined by $\{ z \} = \bigcap_{k=0}^\infty B\bigl( x_k,(t\lambda)^k \bigr)$, we have
  \begin{align*}
    \liminf_{r \downarrow 0} \frac{\PP^s\bigl( A \cap B(z,r) \bigr)}{(2r)^s} &\ge \liminf_{k \to \infty} \frac{\PP^s\bigl( A \cap B(x_{k+1},(t\lambda)^{k+1}) \bigr)}{2^s(t\lambda)^{(k-1)s}} \\ &\ge \liminf_{k \to \infty} \frac{d^{k+1}\lambda^{(k+1)m}}{2^s(t\lambda)^{(k-1)s}} \\ &= \liminf_{k \to \infty} 2^{-s}d^2\lambda^{2m}(d\lambda^{m-s}t^{-s})^{k-1} \\ &\ge \liminf_{k \to \infty} 2^{-s}d^2\lambda^{2m}2^{(s-m)(k-1)} = \infty,
  \end{align*}
  since $t\lambda \le 1/3$, $s>m$ and $\lambda^{m-s} \ge 2^{s-m}t^sd^{-1}$. This contradicts with \eqref{eq:taylor_tricot}. The proof is finished.
\end{proof}

The above result is a special case of the following more general result.

\begin{theorem}[\mbox{\cite[Theorem 2.4]{KaenmakiSuomala2008}}] \label{thm:adv_result}
  If $n \in \N$, $m \in \{ 0,\ldots,n-1 \}$, $0<\alpha\le 1$, and a nondecreasing function $h \colon (0,\infty) \to (0,\infty)$ satisfies
  \begin{equation} \label{eq:gauge}
    \limsup_{r \downarrow 0} \frac{h(\gamma r)}{h(r)} < \gamma^m
  \end{equation}
  for some $0<\gamma<1$, then there is a constant $c=c(n,m,h,\alpha)>0$ satisfying the following: For every measure $\mu$ on $\R^n$ with
  \begin{equation*}
    \liminf_{r \downarrow 0} \frac{\mu\bigl( B(x,r) \bigr)}{h(2r)} < \infty \quad \text{for $\mu$-almost all $x \in \R^n$}
  \end{equation*}
  it holds that
  \begin{equation*}
    \limsup_{r \downarrow 0} \inf_{\atop{\theta \in S^{n-1}}{V \in G(n,n-m)}} \frac{\mu\bigl( X(x,r,V,\alpha) \setminus H(x,\theta,\alpha) \bigr)}{h(2r)} \ge c \limsup_{r \downarrow 0} \frac{\mu\bigl( B(x,r) \bigr)}{h(2r)}
  \end{equation*}
  for $\mu$-almost every $x \in \R^n$.
\end{theorem}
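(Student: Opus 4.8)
The plan is to run the proof of Theorem~\ref{thm:packing} verbatim at the level of geometry, replacing the homogeneous weight $(2r)^s$ by $h(2r)$ and the hypothesis $s>m$ by \eqref{eq:gauge}. The preliminary reduction is weight-independent: compactness of $G(n,n-m)$ yields $V_1,\dots,V_K$ with $X(x,V,\alpha)\supset X(x,V_j,\alpha/2)$ for some $j$, and the lemmas of \cite{CsornyeiKaenmakiRajalaSuomala2009} supply $t$, $q$, the selection property \eqref{eq:erdos}, the inclusion \eqref{eq:hippatsuikka}, and the covering constants $c_1,c_2$; all depend only on $n,m,\alpha$. Writing $M(x)=\limsup_{r\downarrow0}\mu\bigl(B(x,r)\bigr)/h(2r)$, I would assume the conclusion fails on a set of positive measure, discard the null set $\{M=0\}$, and---decomposing and passing to a compact subset---produce $A$ with $\mu(A)>0$ on which $M(x)\in[M,2M)$ for a fixed $M>0$, on which $\liminf_{r\downarrow0}\mu(B(x,r))/h(2r)\le N<\infty$ uniformly, and on which the negated finite-cone inequality $\mu\bigl(X(x,r,V_j,\alpha/2)\setminus H(x,\theta,\alpha)\bigr)<2cM\,h(2r)$ holds for suitable $j,\theta$ at every small $r$.

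On such a set the one-step mechanism of Theorem~\ref{thm:packing}---restrict to an $A_j$, slice to a slab of width $\lambda R$, select $q$ disjoint balls greedily, and feed \eqref{eq:erdos}--\eqref{eq:hippatsuikka} and the negated bound into \eqref{eq:q_arvio}---produces from a ball $B(x,R)$ with $\mu(B(x,R))\ge\tfrac12M\,h(2R)$ a child ball carrying a fixed proportion $\sim d\lambda^m$ of its mass. The decisive new point is the choice of $\lambda$ and the resulting blow-up. By \eqref{eq:gauge} there is $\gamma\in(0,1)$ with $\limsup_{r\downarrow0}h(\gamma r)/h(r)=L<\gamma^m$; fixing $L'\in(L,\gamma^m)$ and $\eta:=L'/\gamma^m<1$, iteration gives $h(\gamma^p r)\le(L')^p h(r)$ for small $r$. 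Since the geometry forces the scale factor to be $t\lambda$ rather than $\gamma$, I would reconcile the two by taking the iteration ratio to be a high power $\gamma^p$, i.e.\ set $\lambda=\gamma^p/t$ with $p=p(n,m,h,\alpha)$ chosen so large that $\eta^p t^m<d$. Then along $R_k=\gamma^{pk}R_0$ the telescoping that in the packing case yields $\mu(B(x_k,R_k))\ge\tfrac12M\,h(2R_0)(d\lambda^m)^k$ combines with $h(2R_k)\le(L')^{pk}h(2R_0)$ to give
\[
  \frac{\mu\bigl(B(x_k,R_k)\bigr)}{h(2R_k)}\ge\frac{M}{2}\Bigl(\frac{d\lambda^m}{(L')^p}\Bigr)^{\!k}=\frac{M}{2}\Bigl(\frac{d}{t^m\eta^p}\Bigr)^{\!k},
\]
which tends to $\infty$. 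Treating intermediate radii by the monotonicity of $h$ and the nesting exactly as in the packing argument, the limit point $z\in A$ then satisfies $\liminf_{r\downarrow0}\mu(B(z,r))/h(2r)=\infty$, contradicting $N$.

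The step I expect to be the genuine obstacle---absent in the homogeneous case---is keeping the \emph{cone} estimate comparable to the \emph{ball} estimate at each scale. Selection \eqref{eq:erdos} forces the controlled child into $X(y_{i_0},3R,V_j,\alpha/2)$, so the negated hypothesis bounds it only by $2cM\,h(6R)$, while the parent mass lives at $h(2R)$; the subtraction in \eqref{eq:q_arvio} therefore costs the factor $g(R):=h(6R)/h(2R)$. As \eqref{eq:gauge} constrains $h$ only at small scales, $g$ need not be bounded---$h$ may jump upward---so no single $c$ makes every one-step remainder positive a priori. The resolution is that the quantity actually governing remainder-positivity is $\mu(B(x_k,R_k))/h(6R_k)=\beta_k/g(R_k)$, and the same gauge bound $h(6R_k)\le(L')^{pk}h(6R_0)$ yields $\beta_k/g(R_k)\ge g(R_0)^{-1}\tfrac{M}{2}\bigl(d/(t^m\eta^p)\bigr)^k$, so every later step inherits positivity for free once the first does; the jumps at small scales are swallowed by the geometric growth. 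Everything is thus thrown back onto the \emph{initial} scale: one must locate $R_0$ that is simultaneously near-maximal for the density, $\mu(B(x,R_0))\ge\tfrac12M\,h(2R_0)$, and regular, $g(R_0)\le 1/c$. Producing such a scale is the crux, and I would obtain it by playing the freedom in the limsup defining $M(x)$ against the non-flatness of $h$ forced by \eqref{eq:gauge}, which prevents the factor-$3$ windows from carrying large jumps at a full (multiplicative) sequence of high-density scales.
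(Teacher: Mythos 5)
First, a framing point: the paper does not actually prove Theorem~\ref{thm:adv_result} --- it states it with a citation to \cite[Theorem 2.4]{KaenmakiSuomala2008} and only proves the special case $h(r)=r^s$ (Theorem~\ref{thm:packing}) in detail. So the only benchmark here is that model proof, and your overall plan --- rerun it with $(2r)^s$ replaced by $h(2r)$, take $\lambda=\gamma^p/t$ with $p$ so large that the per-step mass gain $d\lambda^m$ beats the per-step gauge decay $(L')^p$, and contradict the finiteness of the liminf --- is the right adaptation. You have also correctly isolated the one genuinely new difficulty: the antithesis is invoked at radius $3R$ while the mass lives at radius $R$, and \eqref{eq:gauge} gives no upper bound on $g(R)=h(6R)/h(2R)$.

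The gap is that your proposed resolution of this difficulty cannot work. You throw everything onto finding an initial scale $R_0$ that is simultaneously high-density and regular, $g(R_0)\le 1/c$, asserting that \eqref{eq:gauge} ``prevents the factor-$3$ windows from carrying large jumps at a full multiplicative sequence of high-density scales.'' That is false: \eqref{eq:gauge} bounds $h$ from above only at the \emph{smaller} of two scales, so it is fully compatible with every factor-$3$ window containing an arbitrarily large jump. Concretely, let $h\equiv a_k$ on $[3^{-k-1},3^{-k})$ with $a_{k+1}/a_k\downarrow 0$ (say $a_k=2^{-k^2}$); then \eqref{eq:gauge} holds with $\gamma=1/3$ for every $m$, it is nondecreasing, and yet $h(6\rho)/h(2\rho)=a_{k-1}/a_k\to\infty$ for \emph{every} small $\rho$, so no regular scale exists at all and your crux step is vacuous. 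The repair must be structural rather than a clever choice of $R_0$: perform the very first localization by pure pigeonholing (cover $A\cap B(x_0,R_0)$ by $N(n,t\lambda)$ balls of radius $t\lambda R_0$ centered at $A$ and keep the heaviest), without invoking the antithesis at scale $R_0$ at all; from step $1$ onwards, bound the antithesis term $h(6R_k)$ not by a multiple of $h(2R_k)$ but by $h(2R_{k-1})$, which costs nothing beyond monotonicity because $6R_k=6t\lambda R_{k-1}\le 2R_{k-1}$ (recall $t\lambda\le 1/3$), and then by $(L')^{p(k-1)}h(2R_0)$ via the iterated gauge bound. With this arrangement the constraint on $c$ at $k=1$ is the binding one and all later steps are inherited exactly by the geometric-growth observation you already made, so the blow-up and the contradiction with $\liminf_{r\downarrow 0}\mu\bigl(B(z,r)\bigr)/h(2r)<\infty$ go through. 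As written, however, the proposal is incomplete precisely at the step you yourself flag as the crux, and the route you indicate for closing it is blocked by admissible gauges.
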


\begin{remark}
  If instead of \eqref{eq:gauge}, the function $h \colon (0,\infty) \to (0,\infty)$ satisfies
  \begin{equation*}
    \liminf_{r \downarrow 0} \frac{h(\gamma r)}{h(r)} \ge \gamma^m
  \end{equation*}
  for all $0<\gamma<1$, then \cite[Proposition 3.3]{KaenmakiSuomala2008} implies that the result of Theorem \ref{thm:adv_result} cannot hold. This shows that Theorem \ref{thm:adv_result} fails for gauge functions such as $h(r) = r^m/\log(1/r)$ when $m > 0$.
\end{remark}

\section{Measures with positive dimension}

When working with a Hausdorff or packing type measure $\mu$, it is useful to
study densities such as
\begin{equation*}
  \limsup_{r \downarrow 0} \frac{\mu\bigl(X(x,r,V,\alpha)\bigr)}{h(2r)},
\end{equation*}
where $h$ is the gauge function used to construct the measure $\mu$. However, most measures are so unevenly distributed that there are no gauge functions that could be used to approximate the measure in small balls. To
obtain conical density results for general measures it seems natural to replace the value of the gauge $h$ in the denominator by the measure of the ball $B(x,r)$.

The following result is valid for all measures on $\R^n$.

\begin{theorem}[\mbox{\cite[Theorem 3.1]{CsornyeiKaenmakiRajalaSuomala2009}}] \label{thm:all}
  If $n \in \N$ and $0 < \alpha \le 1$, then there is a constant $c = c(n,\alpha)>0$ satisfying the following: For every measure $\mu$ on $\R^n$ it holds that
  \begin{equation*}
    \limsup_{r \downarrow 0} \inf_{\theta \in S^{n-1}} \frac{\mu\bigl(
      B(x,r) \setminus H(x,\theta,\alpha) \bigr)}{\mu\bigl( B(x,r)
      \bigr)} \ge c
  \end{equation*}
  for $\mu$-almost every $x \in \R^n$.
\end{theorem}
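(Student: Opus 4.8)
The plan is to argue by contradiction, borrowing the mechanism of the proof of Theorem \ref{thm:packing} but replacing its use of the finite packing density \eqref{eq:taylor_tricot} by the Besicovitch differentiation theorem, since for a completely arbitrary measure there is no gauge function available against which to measure a blow-up. Suppose the conclusion fails. Then there are a constant $c>0$ (to be fixed small, depending only on $n$ and $\alpha$), a radius $r_0>0$, and a Borel set $E$ with $\mu(E)>0$ such that for every $x\in E$ and every $0<r<r_0$ there is some $\theta=\theta_{x,r}\in S^{n-1}$ with $\mu\bigl(B(x,r)\cap H(x,\theta,\alpha)\bigr)>(1-c)\mu\bigl(B(x,r)\bigr)$; that is, nearly all the mass of every small ball is funneled into a single cone. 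Passing to a subset I may take $E$ compact with $0<\mu(E)<\infty$, and since $\mu$-almost every point of $E$ is a density point, an application of Egorov's theorem lets me shrink $E$ and $r_0$ so as to secure a uniform lower bound $\mu\bigl(E\cap B(x,r)\bigr)\ge(1-\eta)\mu\bigl(B(x,r)\bigr)$ for all $x\in E$ and $0<r<r_0$, where $\eta>0$ is a second small parameter to be matched against $c$.

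The geometric heart of the argument is a configuration lemma of the type underlying the results cited in the proof of Theorem \ref{thm:packing}, i.e.\ \cite[Lemmas 4.2 and 4.3]{CsornyeiKaenmakiRajalaSuomala2009}: there are an integer $q=q(n,\alpha)$, a constant $t=t(\alpha)\ge1$, and a scale $\lambda>0$ so that whenever $\{B(y_i,t\lambda)\}_{i=1}^q$ are pairwise disjoint balls with centers in a common ball, some center $y_{i_0}$ has the property that for every $\theta\in S^{n-1}$ there is an index $k$ with $B(y_k,\lambda)\subset B(y_{i_0},3)\setminus H(y_{i_0},\theta,\alpha)$. Informally, among sufficiently many separated points one always finds three subtending a large angle, so that no single cone with apex $y_{i_0}$ can swallow all of the remaining centers. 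I would establish this by covering $S^{n-1}$ with finitely many spherical caps of angular radius comparable to $\arccos\alpha$ and running a pigeonhole argument on the directions determined by the centers.

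With these tools the one-scale step proceeds exactly as in Theorem \ref{thm:packing}. Inside a ball centered at a density point I select $q$ pairwise disjoint balls greedily, taking at each stage the available $\lambda$-ball of largest $\mu$-mass, which is possible by upper semicontinuity and compactness. Applying the concentration hypothesis at the distinguished center $y_{i_0}$ to the direction $\theta_{y_{i_0}}$ it supplies, the configuration lemma deposits a full ball $B(y_k,\lambda)$ into the anti-cone $B(y_{i_0},3)\setminus H(y_{i_0},\theta_{y_{i_0}},\alpha)$, whence $\mu\bigl(B(y_k,\lambda)\bigr)<c\,\mu\bigl(B(y_{i_0},3)\bigr)$. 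Since $y_k$ is among the selected balls, this caps the mass of the smallest chosen ball, and together with the uniform density lower bound it forces a definite proportion of the parent mass into one of the remaining, high-mass child balls of comparably smaller radius. Iterating this descent yields a nested sequence of balls shrinking to a single point $z\in E$ along which a fixed fraction of mass survives at every step.

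The final step, and the one I expect to be the genuine obstacle, is to extract a contradiction from this descent \emph{without} a gauge function. In Theorem \ref{thm:packing} one simply observes that retaining a fixed fraction of mass while the radius contracts by a fixed factor makes the normalized $s$-density diverge, contradicting \eqref{eq:taylor_tricot}; here no exponent $s$ is available and, indeed, keeping a fixed mass fraction at each scale is perfectly consistent with a general locally finite measure. The contradiction must therefore be read off purely from mass ratios at the density point $z$: the repeated funnelling of mass into thinner cones with vertices marching toward $z$ has to be shown incompatible with $\mu\bigl(E\cap B(z,r)\bigr)/\mu\bigl(B(z,r)\bigr)\to1$, the anti-cone complements carrying the discarded mass at successive scales accumulating to positive relative measure at $z$. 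Making this quantitative, by choosing $c$ small enough (and then $\eta$, $q$, $\lambda$ accordingly) that the per-step loss is strictly controlled and the relative measure of $E$ is provably driven below $1-\eta$ at some small scale, is where the delicate balance of the argument lies.
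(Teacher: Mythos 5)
The paper does not actually prove Theorem \ref{thm:all}; it is quoted from \cite[Theorem 3.1]{CsornyeiKaenmakiRajalaSuomala2009}, so your proposal has to stand on its own. It does not: as you concede in your final paragraph, you have no argument for the concluding contradiction, and that admission is accurate --- the step is genuinely absent and it is the heart of the theorem. Your ingredients up to that point (contradiction setup, density points via \cite[Corollary 2.14]{Mattila1995}, the large-angle configuration fact behind \eqref{eq:erdos}, greedy selection by mass) are the right ones, but the claim that ``the one-scale step proceeds exactly as in Theorem \ref{thm:packing}'' conceals two further gaps. First, for a completely general measure you cannot guarantee $q$ points of $E \cap B(x_0,r)$ whose $t\lambda r$-balls are pairwise disjoint: $\mu$ may charge a set of arbitrarily small diameter relative to $r$, even a single point. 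In the packing proof this separation is extracted from the slicing estimate \eqref{eq:slice}, which rests on $\PP^s(A)<\infty$ and has no analogue here. Second, and more fatally, the normalization changes: in Theorem \ref{thm:packing} the anti-cone bound reads $\PP^s(\cdots)<c6^s$, an absolute bound because the denominator is the gauge $(2r)^s$, whereas your antithesis only yields $\mu\bigl(B(y_k,\lambda r)\bigr)<c\,\mu\bigl(B(y_{i_0},3r)\bigr)$. Without a doubling hypothesis $\mu\bigl(B(y_{i_0},3r)\bigr)$ need not be comparable to $\mu\bigl(B(x_0,r)\bigr)$, so this inequality can be vacuous; this is precisely the difficulty that forces the assumption \eqref{eq:doubling} in Theorem \ref{thm:unrect}, and any proof of Theorem \ref{thm:all} must get around it by a more careful choice of the scales and centers at which the antithesis is invoked.

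Your diagnosis of the terminal step is at least honest and correct in one respect: the infinite-descent mechanism of Theorem \ref{thm:packing} cannot close the argument, since a general locally finite measure may perfectly well retain a fixed mass fraction in balls whose radii contract geometrically, and there is no density statement at the limit point $z$ to violate. The contradiction therefore has to be extracted at a single scale, from mass ratios alone: roughly, one arranges matters so that the greedily chosen ball landing in the anti-cone of $y_{i_0}$ is guaranteed to carry at least a fixed fraction (depending only on $n$, $\alpha$, and covering numbers) of the mass that the antithesis asserts is concentrated in the cone at $y_{i_0}$, at the very radius for which the antithesis is applied. Setting up that bookkeeping so that all measures are compared at compatible centers and radii is the substantive content of the proof in \cite{CsornyeiKaenmakiRajalaSuomala2009}, and it is the part your proposal leaves open.
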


By assuming a lower bound for the Hausdorff dimension of the measure, the measure will be scattered enough so that we are able to prove a result similar to Theorem \ref{thm:hausdorff} for general measures. The \emph{(lower) Hausdorff and packing dimensions of a measure $\mu$} are defined by
\begin{align*}
  \dimh(\mu) &= \inf\{ \dimh(A) : A \text{ is a Borel set with }
  \mu(A)>0 \}, \\
  \dimp(\mu) &= \inf\{ \dimp(A) : A \text{ is a Borel set with }
  \mu(A)>0 \},
\end{align*}
where $\dimh(A)$ and $\dimp(A)$ denote the Hausdorff and packing dimensions of the set $A \subset \R^n$, respectively. The reader is referred to \cite[\S 4 and \S 5.9]{Mattila1995}.

\begin{theorem}[\mbox{\cite[Theorem 4.1]{CsornyeiKaenmakiRajalaSuomala2009}}] \label{thm:dimh_positive}
  If $n \in \N$, $m \in \{ 0,\ldots,n-1 \}$, $m<s\le n$, and $0<\alpha\le 1$, then there is a constant $c=c(n,m,s,\alpha)>0$ satisfying the following: For every measure $\mu$ on $\R^n$ with $\dimh(\mu) \ge s$ it holds that
  \begin{equation*} 
    \limsup_{r \downarrow 0} \inf_{\atop{\theta \in S^{n-1}}{V \in
        G(n,n-m)}} \frac{\mu\bigl( X(x,r,V,\alpha) \setminus
      H(x,\theta,\alpha) \bigr)}{\mu\bigl( B(x,r) \bigr)} \ge c
  \end{equation*}
  for $\mu$-almost every $x \in \R^n$.
\end{theorem}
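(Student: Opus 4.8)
The plan is to run the same contradiction-and-iteration scheme as in the proof of Theorem \ref{thm:packing}, but with two changes forced by the fact that the denominator is now $\mu\bigl(B(x,r)\bigr)$ rather than the fixed gauge $(2r)^s$. First I would carry out the identical compactness reduction to finitely many planes $V_1,\dots,V_K$, so that it suffices to bound $\inf_{\theta,j}\mu\bigl(X(x,r,V_j,\alpha/2)\setminus H(x,\theta,\alpha)\bigr)/\mu\bigl(B(x,r)\bigr)$ from below, and I would reuse verbatim the combinatorial input \eqref{eq:erdos} and \eqref{eq:hippatsuikka} together with the covering constants $c_1,c_2$. Fix once and for all an exponent $s'$ with $m<s'<s$, and assume for contradiction that there are $r_0>0$ and a compact set $A$ with $\mu(A)>0$ on which the conical density stays below $c$ for every $r<r_0$.

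The role played in the packing proof by the density normalization \eqref{eq:taylor_tricot} is now taken over by the hypothesis $\dimh(\mu)\ge s$, which I would exploit in two ways. On the one hand, $\dimh(\mu)\ge s$ gives $\liminf_{r\downarrow0}\log\mu\bigl(B(x,r)\bigr)/\log r\ge s>s'$ for $\mu$-almost every $x$, so by Egorov I may shrink $A$ to keep $\mu(A)>0$ while arranging $\mu\bigl(B(x,r)\bigr)\le r^{s'}$ for all $x\in A$ and $0<r<r_0$; this upper regularity is what the final contradiction will violate. On the other hand---and this is the real novelty---when the conical bound is applied at the Erd\H os apex $y_{i_0}$ it now reads $\mu\bigl(A\cap B(y_q,\lambda R_k)\bigr)<c\,\mu\bigl(B(y_{i_0},3R_k)\bigr)$, so in place of the absolute quantity $c6^s$ of the packing argument there appears the measure of an \emph{enlarged} ball $B(y_{i_0},3R_k)\subset B(x_k,4R_k)$. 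To recover a genuinely multiplicative recursion I must compare this enlarged ball to the tracked mass $M_k=\mu\bigl(A\cap B(x_k,R_k)\bigr)$. Such a comparison is a doubling-and-density condition, and the scales at which it holds are abundant because for every measure $\liminf_{r\downarrow0}\log\mu\bigl(B(x,r)\bigr)/\log r\le n$ for $\mu$-almost every $x$: a short computation shows that if $\mu\bigl(B(x,Lr)\bigr)>L^{n+1}\mu\bigl(B(x,r)\bigr)$ held for all small $r$ then this lower exponent would be at least $n+1$, so for $\mu$-almost every $x$ there are arbitrarily small $r$ with $\mu\bigl(B(x,Lr)\bigr)\le L^{n+1}\mu\bigl(B(x,r)\bigr)$, and combined with a density point these give ``good'' scales at which $\mu\bigl(B(x,4R_k)\bigr)\le C_0M_k$.

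With these tools the iteration proceeds as before. At a good scale $R_k$ about the current centre $x_k\in A$ I would decompose $A=\bigcup_j A_j$, select a slice of the right direction carrying a fixed proportion of $M_k$, choose the $q$ greedy balls, and combine \eqref{eq:erdos} and \eqref{eq:hippatsuikka} exactly as in \eqref{eq:slice}--\eqref{eq:q_arvio}; the only difference is that the subtracted term is now $c_2\lambda^{m-n}c\,\mu\bigl(B(y_{i_0},3R_k)\bigr)\le c_2\lambda^{m-n}c\,C_0M_k$, using the doubling and density built into a good scale. Choosing $c$ so small that this is at most half of the main term $\lambda^mM_k/(c_1K)$ yields $M_{k+1}\ge d\lambda^mM_k$ with $d=1/\bigl(2c_1K(q-1)\bigr)$, at a scale $R_{k+1}\le t\lambda R_k$. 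Taking $\lambda$ small enough that $d\lambda^m>(t\lambda)^{s'}$---the analogue of the packing inequality $\lambda^{m-s}\ge2^{s-m}t^sd^{-1}$---gives $M_k\ge(d\lambda^m)^kM_0$ while $R_k\le(t\lambda)^kR_0$, so the limit point $z=\lim_kx_k\in A$ satisfies $\mu\bigl(B(z,4R_k)\bigr)\ge M_k$ with $M_k/(4R_k)^{s'}\to\infty$. Thus $\mu\bigl(B(z,4R_k)\bigr)>(4R_k)^{s'}$ for all large $k$, contradicting the upper regularity $\mu\bigl(B(z,r)\bigr)\le r^{s'}$ of $A$ and finishing the argument.

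The step I expect to be the main obstacle is the one with no counterpart in the packing proof: maintaining the good-scale property along the iteration while retaining a fixed fraction of the mass. The recursion delivers a ball of radius $t\lambda R_k$ about the new centre $x_{k+1}$ carrying mass $\gtrsim M_k$, but that radius need not itself be a doubling-and-density scale for $x_{k+1}$, so before the next step I must descend to the largest good scale $R_{k+1}\le t\lambda R_k$ at $x_{k+1}$ and verify that $\mu\bigl(A\cap B(x_{k+1},R_{k+1})\bigr)$ is still comparable to $M_k$. I would handle this by a stopping-time selection of good scales, bounding the mass lost across the skipped ``bad'' octaves by the abundance estimate above; making this loss uniformly controlled, so that the factor $d\lambda^m$ survives at every step, is the crux of the proof.
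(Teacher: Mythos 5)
The paper does not actually prove Theorem \ref{thm:dimh_positive}; it quotes it from \cite{CsornyeiKaenmakiRajalaSuomala2009}, so your proposal can only be measured against the packing-measure proof you are adapting and against its own internal logic. Much of the architecture is right: the reduction to finitely many planes, the reuse of \eqref{eq:erdos} and \eqref{eq:hippatsuikka}, the Egorov step converting $\dimh(\mu)\ge s$ into a uniform bound $\mu\bigl(B(x,r)\bigr)\le r^{s'}$ on a compact set of positive measure with $m<s'<s$, and the observation that for every measure and $\mu$-almost every $x$ there are arbitrarily small radii with $\mu\bigl(B(x,Lr)\bigr)\le L^{n+1}\mu\bigl(B(x,r)\bigr)$. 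You have also correctly located the one place where the packing argument genuinely breaks: the antithesis now yields the term $c\,\mu\bigl(B(y_{i_0},3R_k)\bigr)$, which must be compared with the tracked mass $M_k$, and absent a doubling hypothesis this comparison is the whole difficulty.

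The fix you propose for that step --- descend at each stage to the largest ``good'' scale $R_{k+1}\le t\lambda R_k$ for $x_{k+1}$ and control the mass lost across the skipped bad octaves ``by the abundance estimate'' --- does not work, and this is a genuine gap rather than a deferred verification. The abundance estimate only says that good scales exist arbitrarily close to $0$; it gives no bound on the number $j_k$ of octaves that must be skipped below $t\lambda R_k$ before one is reached, and by the very definition of a bad octave each skipped octave costs a mass factor worse than $2^{-(n+1)}$ while the radius shrinks only by $2^{-1}$. Since $n+1>s\ge s'$, an excursion through $j_k$ bad octaves multiplies the quantity $M_k/R_k^{s'}$ that you need to drive to infinity by a factor smaller than $2^{-j_k(n+1-s')}$, and the $j_k$ are not uniformly bounded; along such a sequence $\log M_k/\log R_k$ can converge to a value as large as $n+1$, which is perfectly compatible with the upper regularity $\mu\bigl(B(z,\rho)\bigr)\le\rho^{s'}$, so the final contradiction evaporates precisely for badly non-doubling measures --- the only case not already handled by arguments in the spirit of Theorem \ref{thm:unrect}. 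Nor can one dodge the issue by bounding $\mu\bigl(B(y_{i_0},3R_k)\bigr)$ once and for all by $\mu\bigl(B(x_0,4r)\bigr)$ at a single initial good scale: then the subtracted term stays of order $c\,M_0$ at every step while the main term decays like $(d\lambda^m)^kM_0$, so no fixed $c>0$ survives infinitely many iterations. Making the recursion close up with a constant depending only on $(n,m,s,\alpha)$ is exactly the content of the cited proof and requires an additional idea specific to measures with $\dimh(\mu)>m$; as it stands, your stopping-time patch is the assertion of the theorem's hard part, not a proof of it.
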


\begin{question}
  Does Theorem \ref{thm:dimh_positive} hold if we just assume $\dimp(\mu) \ge s$ instead of $\dimh(\mu) \ge s$?
\end{question}

\section{Purely unrectifiable measures} \label{sec:unrect}

Another condition to guarantee the measure to be scattered enough is unrectifiability. A measure on $\R^n$ is called \emph{purely $m$-unrectifiable} if $\mu(A)=0$ for all $m$-rectifiable sets $A \subset \R^n$. We refer the reader to \cite[\S 15]{Mattila1995} for the basic properties of rectifiable sets. Applying the ideas of \cite[Lemma 15.14]{Mattila1995}, we are able to prove the following theorem.

\begin{theorem} \label{thm:unrect}
  If $d > 0$ and $0<\alpha\le 1$, then there is a constant $c=c(d,\alpha)>0$ satisfying the following: For every $n \in \N$, $m \in \{ 1,\ldots,n-1 \}$, $V \in G(n,n-m)$, and purely $m$-unrectifiable measure $\mu$ on $\R^n$ with
  \begin{equation} \label{eq:doubling}
    \limsup_{r \downarrow 0} \frac{\mu\bigl( B(x,2r) \bigr)}{\mu\bigl( B(x,r) \bigr)} < d \quad \text{for $\mu$-almost all $x \in \R^n$}
  \end{equation}
  it holds that
  \begin{equation} \label{eq:unrect_claim}
    \limsup_{r \downarrow 0} \frac{\mu\bigl( X(x,r,V,\alpha) \bigr)}{\mu\bigl( B(x,r) \bigr)} \ge c
  \end{equation}
  for $\mu$-almost every $x \in \R^n$.
\end{theorem}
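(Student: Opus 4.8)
The plan is to argue by contradiction, converting a failure of \eqref{eq:unrect_claim} into the existence of an $m$-dimensional Lipschitz graph of positive $\mu$-measure, which is impossible since $\mu$ is purely $m$-unrectifiable. Suppose \eqref{eq:unrect_claim} fails on a set of positive measure, so that for every $x$ in a Borel set $A$ with $\mu(A)>0$ we have $\limsup_{r\downarrow 0}\mu\bigl(X(x,r,V,\alpha)\bigr)/\mu\bigl(B(x,r)\bigr)<c$, where $c=c(d,\alpha)$ will be fixed at the end. First I would pass to a positive-measure subset $A''$ on which three asymptotic conditions become uniform at one common scale $r_0>0$: the cone smallness $\mu\bigl(X(x,r,V,\alpha)\bigr)<c\,\mu\bigl(B(x,r)\bigr)$ for all $r<r_0$; the doubling bound $\mu\bigl(B(x,2r)\bigr)<d\,\mu\bigl(B(x,r)\bigr)$ coming from \eqref{eq:doubling} for all $r<r_0$; and a uniform density lower bound $\mu\bigl(A''\cap B(x,s)\bigr)>\tfrac12\mu\bigl(B(x,s)\bigr)$ for all $s<r_0$, obtained from the density point theorem for Radon measures together with an Egorov-type selection. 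The purpose of this uniformization is that the cone and density conditions, which are only asymptotic as $r\downarrow 0$, can then be invoked at the fixed scale $\rho=|x-y|$ for pairs $x,y\in A''$.

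The heart of the argument is a single estimate showing that $A''$ satisfies a cone-avoidance condition. Suppose toward a contradiction that there are $x,y\in A''$ with $\rho:=|x-y|<r_0/3$ and $y\in X(x,V,\alpha/2)$. A short computation shows that for $\eps=\eps(\alpha)$ small enough the whole ball $B(y,\eps\rho)$ lies inside $X(x,V,\alpha)\cap B(x,2\rho)$, precisely because $y$ sits strictly inside the half-opening cone. Hence, by the uniform density of $A''$ at $y$,
\[
  \mu\bigl(X(x,V,\alpha)\cap B(x,2\rho)\bigr)\ge\mu\bigl(A''\cap B(y,\eps\rho)\bigr)>\tfrac12\mu\bigl(B(y,\eps\rho)\bigr).
\]
I would then transfer this lower bound from the small ball $B(y,\eps\rho)$ up to $B(x,2\rho)$ by doubling: since $B(x,2\rho)\subset B(y,3\rho)$ and iterating \eqref{eq:doubling} across the $N=N(\alpha)$ dyadic scales between $\eps\rho$ and $3\rho$ costs only a factor $d^{N}$, one gets $\mu\bigl(B(y,\eps\rho)\bigr)\ge d^{-N}\mu\bigl(B(x,2\rho)\bigr)$. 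Combining the two displays yields $\mu\bigl(X(x,V,\alpha)\cap B(x,2\rho)\bigr)>\tfrac12 d^{-N}\mu\bigl(B(x,2\rho)\bigr)$, which contradicts the cone smallness at $x$ and scale $2\rho<r_0$ as soon as we fix $c=c(d,\alpha):=\tfrac12 d^{-N}$. Therefore no such pair exists: for all $x,y\in A''$ with $|x-y|<r_0/3$ we have $y\notin X(x,V,\alpha/2)$, that is, $|\proj_{V^\bot}(y-x)|\ge(\alpha/2)|y-x|$.

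This cone-avoidance is exactly the Lipschitz graph condition, which is where the ideas of \cite[Lemma 15.14]{Mattila1995} enter: on each ball of radius $r_0/6$ the projection $\proj_{V^\bot}$ is injective on $A''$ with a Lipschitz inverse, so there $A''$ is the graph of an $\tfrac{2}{\alpha}$-Lipschitz map from a subset of the $m$-plane $V^\bot$ into $V$, hence $m$-rectifiable; covering $\R^n$ by countably many such balls shows that $A''$ is $m$-rectifiable. Since $\mu(A'')>0$, this contradicts the pure $m$-unrectifiability of $\mu$ and finishes the proof. I expect the main obstacle to be the uniformization step, since the cone, density, and doubling hypotheses are all merely asymptotic and must be made simultaneously effective at the common scale $\rho=|x-y|$; once that is arranged, the doubling transfer and the Lipschitz-graph identification are routine, and the only real care needed is to verify that $\eps$, $N$, and hence $c$ depend on nothing beyond $d$ and $\alpha$.
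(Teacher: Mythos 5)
Your proof is correct, but it takes a genuinely different route from the paper's. The paper, following \cite[Lemma 15.14]{Mattila1995}, does not try to show that the cones avoid $A$: it sets $h(x)=\sup\{|y-x|:y\in A\cap X(x,r,V,\alpha/4)\}$, uses pure unrectifiability (via \cite[Lemma 15.13]{Mattila1995}) only to get $h(x)>0$ almost everywhere, observes that each slab $\proj_{V^\bot}^{-1}\bigl(B_{V^\bot}(x,\alpha h(x)/4)\bigr)$ meets $A$ only inside two cones of small measure (one at $x$, one at a far point $y_x$ of the cone), and then combines a $5r$-covering argument with the doubling hypothesis to contradict the density estimate $\mu\bigl(A\cap B(x_0,r)\bigr)>\mu\bigl(B(x_0,r)\bigr)/2$. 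You instead exploit doubling at the level of a single pair of points: a hypothetical $y\in A''\cap X(x,V,\alpha/2)$ at distance $\rho$ forces $B(y,\eps\rho)\subset X(x,2\rho,V,\alpha)$, and the doubling chain at $y$ transfers $\mu\bigl(B(y,\eps\rho)\bigr)\ge d^{-N}\mu\bigl(B(x,2\rho)\bigr)$, contradicting cone smallness; hence the $\alpha/2$-cones are genuinely empty of $A''$, so $A''$ is locally a Lipschitz graph over the $m$-plane $V^\bot$, and only there does unrectifiability enter. This short-circuits the covering argument entirely and is a legitimate simplification under the doubling assumption \eqref{eq:doubling}; the paper's longer route is the one that survives in non-doubling settings such as the Hausdorff-measure case \cite[Corollary 15.16]{Mattila1995}, where measure cannot be transferred between scales as you do. Two small remarks: your uniform density bound for $A''$ is never actually needed, since the whole ball $B(y,\eps\rho)$ lies in the cone and so its full measure is already the required lower bound --- this also removes the slightly circular step of making the density of $A''$ uniform on $A''$ itself; and the intermediate radii in your doubling chain must stay below $r_0$, so take $\rho<r_0/6$ rather than $r_0/3$. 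Your constant $c=\tfrac12 d^{-N(\alpha)}$ depends only on $d$ and $\alpha$, matching the quality of the paper's $(4bd)^{-1}$.
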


\begin{proof}
  Fix $d>0$ and $0<\alpha\le 1$. Observe that there exists a constant $b=b(d,\alpha)>0$ such that any measure $\mu$ satisfying \eqref{eq:doubling} fulfills
  \begin{equation*}
    \limsup_{r \downarrow 0} \frac{\mu\bigl( B(x,3r) \bigr)}{\mu\bigl( B(x,\alpha r/20) \bigr)} < b
  \end{equation*}
  for $\mu$-almost all $x \in \R^n$.

  We will prove that \eqref{eq:unrect_claim} holds with $c=(4bd)^{-1}$. Assume to the contrary that for some $n \in \N$, $m \in \{ 1,\ldots,n-1 \}$, $V \in G(n,n-m)$, and purely $m$-unrectifiable measure $\mu$ satisfying \eqref{eq:doubling} there exist $r_0>0$ and a Borel set $A \subset \R^n$ with $\mu(A)>0$ so that
  \begin{equation} \label{eq:yksi}
    \mu\bigl( X(x,2r,V,\alpha) \bigr) < c\mu\bigl( B(x,2r) \bigr)
  \end{equation}
  for all $0<r<r_0$ and for every $x \in A$. We may further assume that
  \begin{align}
    \mu\bigl( B(x,2r) \bigr) &\le d\mu\bigl( B(x,r) \bigr), \label{eq:kolme} \\
    \mu\bigl( B(x,3r) \bigr) &\le b\mu\bigl( B(x,\alpha r/20) \bigr) \label{eq:kaksi}
  \end{align}
  for all $0<r<r_0$ and for every $x \in A$.

  Recalling \cite[Corollary 2.14(1)]{Mattila1995}, we fix $x_0 \in A$ and $0<r<r_0$ so that
  \begin{equation} \label{eq:nelja}
    \mu\bigl( A \cap B(x_0,r) \bigr) > \mu\bigl( B(x_0,r) \bigr)/2.
  \end{equation}
  For each $x \in A \cap B(x_0,r)$ we define $h(x) = \sup\{ |y-x| : y \in A \cap X(x,r,V,\alpha/4) \}$. Since $\mu$ is purely $m$-unrectifiable, it follows from \cite[Lemma 15.13]{Mattila1995} that $h(x)>0$ for $\mu$-almost all $x \in A \cap B(x_0,r)$. For each $x \in A \cap B(x_0,r)$ with $h(x)>0$ we choose $y_x \in A \cap X(x,r,V,\alpha/4)$ such that $|y_x-x|>3h(x)/4$. Inspecting the proof of \cite[Lemma 15.14]{Mattila1995}, we see that
  \begin{equation} \label{eq:viisi}
    A \cap \proj_{V^\bot}^{-1}\bigl( B_{V^\bot}(x,\alpha h(x)/4) \bigr) \subset X(x,2h(x),V,\alpha) \cup X(y_x,2h(x),V,\alpha)
  \end{equation}
  for all $x \in A \cap B(x_0,r)$. Applying the $5r$-covering theorem (\cite[Theorem 2.1]{Mattila1995}) to the collection $\bigl\{ B_{V^\bot}\bigl( x,\alpha h(x)/4 \bigr) : x \in A \cap B(x_0,r) \textrm{ with } h(x)>0 \bigr\}$, we find a countable collection of pairwise disjoint balls $\bigl\{ B_{V^\bot}\bigl(x_i,\alpha h(x_i)/20\bigr) \bigr\}_i$ so that
  \begin{equation} \label{eq:seitseman}
    \bigcup_{h(x)>0} \proj_{V^\bot}^{-1}\bigl( B_{V^\bot}(x,\alpha h(x)/4) \bigr) \subset \bigcup_i \proj_{V^\bot}^{-1}\bigl( B_{V^\bot}(x_i,\alpha h(x_i)/4) \bigr).
  \end{equation}
  Now \eqref{eq:seitseman}, \eqref{eq:viisi}, \eqref{eq:yksi}, \eqref{eq:kaksi}, and \eqref{eq:kolme} imply
  \begin{align*}
    \mu\bigl( A \cap B(x_0,r) \bigr) &\le \sum_i \mu\bigl( A \cap B(x_0,r) \cap \proj_{V^\bot}^{-1}\bigl( B_{V^\bot}( x_i,\alpha h(x_i)/4) \bigr) \bigr) \\
    &\le c\sum_i \mu\bigl( B(x_i,2h(x_i)) \bigr) + c\sum_i \mu\bigl( B(y_{x_i},2h(x_i)) \bigr) \\
    &\le 2c\sum_i \mu\bigl( B(x_i,3h(x_i)) \bigr) \le 2cb\sum_i \mu\bigl( B(x_i,\alpha h(x_i)/20) \bigr) \\
    &\le 2cb \mu\bigl( B(x_0,2r) \bigr) \le 2cbd \mu\bigl( B(x_0,r) \bigr) = \mu\bigl( B(x_0,r) \bigr)/2,
  \end{align*}
  that is, a contradiction with \eqref{eq:nelja}. The proof is finished.
\end{proof}

\begin{remark}
  Theorem \ref{thm:unrect} does not hold without the assumption \eqref{eq:doubling}, see \cite[Example 5.5]{CsornyeiKaenmakiRajalaSuomala2009} for a counterexample. Recall also Remark \ref{rem:rect}. Observe that one cannot hope to generalize the result by taking the infimum over all $V \in G(n,n-m)$ before taking the $\limsup$ as in Theorem \ref{thm:hausdorff}. A counterexample follows immediately from \cite[Example 5.4]{CsornyeiKaenmakiRajalaSuomala2009} by noting that the set constructed in the example supports a $1$-regular measure, that is, a measure giving for each small ball measure proportional to the radius. See also \cite[Proposition 3.3 and Remark 3.4]{KaenmakiSuomala2008} and \cite[Question 4.2]{KaenmakiSuomala2008} for related discussion.
\end{remark}

%
%

\begin{thebibliography}{10}

\bibitem{Besicovitch1938}
A.~S.~Besicovitch.
\newblock On the fundamental geometrical properties of linearly measurable
  plane sets of points {II}.
\newblock {\em Math. Ann.}, 115:296--329, 1938.

\bibitem{RajalaSmirnov2007}
D.~Beliaev, E.~J{\"a}rvenp{\"a}{\"a}, M.~J{\"a}rvenp{\"a}{\"a}, A.~K{\"a}enm{\"a}ki, T.~Rajala, S.~Smirnov, and V.~Suomala.
\newblock Packing dimension of mean porous measures.
\newblock 2007.
\newblock University of Jyv{\"a}skyl{\"a}, preprint 341,
  http://www.jyu.fi/science/laitokset/maths/tutkimus/preprints.

\bibitem{BeliaevSmirnov2002}
D.~Beliaev and S.~Smirnov.
\newblock On dimension of porous measures.
\newblock {\em Math. Ann.}, 323(1):123--141, 2002.

\bibitem{Chousionis2008b}
V.~Chousionis.
\newblock Directed porosity on conformal iterated function systems and weak
  convergence of singular integrals.
\newblock {\em Ann. Acad. Sci. Fenn. Math.}, 2009.
\newblock to appear.

\bibitem{CsornyeiKaenmakiRajalaSuomala2009}
M.~Cs{\"o}rnyei, A.~K{\"a}enm{\"a}ki, T.~Rajala, and V.~Suomala.
\newblock Upper conical density results for general measures on {$\R^n$}.
\newblock {\em Proc. Edinburgh Math. Soc.}, 2009.
\newblock to appear.

\bibitem{EckmannJarvenpaaJarvenpaa2000}
J.-P.~Eckmann, E.~J{\"a}rvenp{\"a}{\"a}, and M.~J{\"a}rvenp{\"a}{\"a}.
\newblock Porosities and dimensions of measures.
\newblock {\em Nonlinearity}, 13(1):1--18, 2000.

\bibitem{Federer1969}
H.~Federer.
\newblock {\em Geometric Measure Theory}.
\newblock Springer-Verlag, Berlin, 1969.

\bibitem{JarvenpaaJarvenpaa2002}
E.~J{\"a}rvenp{\"a}{\"a} and M.~J{\"a}rvenp{\"a}{\"a}.
\newblock Porous measures on {$\R^n$}: local structure and dimensional
  properties.
\newblock {\em Proc. Amer. Math. Soc.}, 130(2):419--426, 2002.

\bibitem{JarvenpaaJarvenpaaKaenmakiRajalaRogovinSuomala20%
07}
E.~J{\"a}rvenp{\"a}{\"a}, M.~J{\"a}rvenp{\"a}{\"a}, A.~K{\"a}enm{\"a}ki, T.~Rajala, S.~Rogovin, and V.~Suomala.
\newblock Packing dimension and {A}hlfors regularity of porous sets in metric
  spaces.
\newblock 2007.
\newblock University of Jyv{\"a}skyl{\"a}, preprint 356,
  http://www.jyu.fi/science/laitokset/maths/tutkimus/preprints.

\bibitem{JarvenpaaJarvenpaaKaenmakiSuomala2005}
E.~J{\"a}rvenp{\"a}{\"a}, M.~J{\"a}rvenp{\"a}{\"a}, A.~K{\"a}enm{\"a}ki, and V.~Suomala.
\newblock Asympotically sharp dimension estimates for {$k$}-porous sets.
\newblock {\em Math. Scand.}, 97(2):309--318, 2005.

\bibitem{KaenmakiSuomala2004}
A.~K{\"a}enm{\"a}ki and V.~Suomala.
\newblock Nonsymmetric conical upper density and {$k$}-porosity.
\newblock {\em Trans. Amer. Math. Soc.}, 2004.
\newblock to appear.

\bibitem{KaenmakiSuomala2008}
A.~K{\"a}enm{\"a}ki and V.~Suomala.
\newblock Conical upper density theorems and porosity of measures.
\newblock {\em Adv. Math.}, 217(3):952--966, 2008.

\bibitem{Marstrand1954}
J.~M.~Marstrand.
\newblock Some fundamental geometrical properties of plane sets of fractional
  dimensions.
\newblock {\em Proc. London Math. Soc. (3)}, 4:257--301, 1954.

\bibitem{Mattila1988}
P.~Mattila.
\newblock Distribution of sets and measures along planes.
\newblock {\em J. London Math. Soc. (2)}, 38(1):125--132, 1988.

\bibitem{Mattila1995}
P.~Mattila.
\newblock {\em Geometry of Sets and Measures in Euclidean Spaces: Fractals and
  Rectifiability}.
\newblock Cambridge University Press, Cambridge, 1995.

\bibitem{MorseRandolph1944}
A.~P.~Morse and J.~F.~Randolph.
\newblock The $\phi$ rectifiable subsets of the plane.
\newblock {\em Trans. Amer. Math. Soc.}, 55:236--305, 1944.

\bibitem{Rajala2009}
T.~Rajala.
\newblock Large porosity and dimension of sets in metric spaces.
\newblock {\em Ann. Acad. Sci. Fenn. Math.}, 2009.
\newblock to appear.

\bibitem{Salli1985}
A.~Salli.
\newblock Upper density properties of {H}ausdorff measures on fractals.
\newblock {\em Ann. Acad. Sci. Fenn. Ser. A I Math. Dissertationes No. 55},
  1985.

\end{thebibliography}


\end{document}